\newtheorem{definition}{Definition}[section]
\newtheorem{lemma}[definition]{Lemma}
\newtheorem{theorem}[definition]{Theorem}
\newtheorem{proposition}[definition]{Proposition}
\newtheorem{remark}[definition]{Remark}
\numberwithin{equation}{section} 
\newcommand \Phit {\widetilde \Phi} 
\newcommand \xih {\widehat \xi} 
\newcommand \Hcal {\mathcal H} 
\newcommand \rhob {\overline \rho}
\newcommand \ub {\overline u}
\newcommand \ut {\widetilde u}
\newcommand \Ut {\widetilde U}
\newcommand \Zt {\widetilde Z}
\newcommand \be   {\begin{equation}}
\newcommand \ee   {\end{equation}} 
\newcommand \R      	{\mathbb{R}} 
\newcommand \Rn      	{\mathbb{R}^n} 
\newcommand \eps      \epsilon  
\newcommand \del \partial
\newcommand \tr  {\text{tr}}
\newcommand \supp  {\text{supp}}
\newcommand \para \parallel 
\newcommand \deltab {\overline\delta}
\begin{document} 
\bibliographystyle{plain}  
\title[A symmetrization of the relativistic Euler equations]
{A symmetrization of the relativistic Euler equations in several spatial variables} 
\author[P.G. L{\tiny e}Floch and S. Ukai]
{
%
Philippe G. L{\scriptsize e}Floch and Seiji Ukai  
}  
\thanks{\noindent 
$^1$ Laboratoire Jacques-Louis Lions 
\& Centre National de la Recherche Scientifique, 
Ê      Universit\'e Pierre et Marie Curie (Paris 6), 4 Place Jussieu, 75252 Paris, 
 Ê     France. 
 \\
 Email : {\tt pgLeFloch@gmail.com} 
 \newline 
$^2$  Department of Mathematics, City University of Hong Kong, 
 83 Tat Chee Avenue, Kowloon Tong, Hong Kong. Email : {\tt suk03919@ec.catv-yokohama.ne.jp.}
\, 
\textit{\ 2000 AMS Subject Classification:} 35L65, 76N10.
\textit{Key Words:} relativistic fluid, Euler equations, vacuum, symmetric hyperbolic system.
}  

\begin{abstract}   
We consider the Euler equations governing    
relativistic compressible fluids evolving in the Minkowski spacetime with several 
spatial variables.  
We propose a new symmetrization 
which makes sense for solutions containing vacuum states 
and, for instance, applies to the case of compactly supported solutions, which are
important to model star dynamics.
Then, relying on these symmetrization and assuming that the velocity does not exceed some threshold
and remains bounded away from the light speed, 
we deduce a local-in-time existence result for solutions containing vacuum states. 
We also observe that the support of compactly supported solutions does not expand as time evolves. 
\end{abstract}

\date{December 8, 2008}
\maketitle

\tableofcontents

\newpage 


\section{Introduction and main result}
\label{IN-0}

The dynamics of relativistic compressible fluids evolving in  the
Minkowski spacetime with $n$ spatial variables 
is governed by the Euler equations (for instance \cite{Landau})
\be
\label{Euler}
\aligned 
&\del_t\Bigl({\rho + \eps^2 \, p \over 1 - \eps^2 \, |u|^2} - \eps^2 \, p \Bigr)
  + \sum_{k=1}^n \del_{x_k} \Bigl( {\rho + \eps^2 p \over 1 - \eps^2 |u|^2} \, u_k \Bigr) = 0,
\\
& \del_t \Bigl( {\rho + \eps^2 p \over 1 - \eps^2 |u|^2} \, u_j \Bigr) 
  + \sum_{k=1}^n \del_{x_k} \Bigl( {\rho + \eps^2 p \over 1 - \eps^2 |u|^2} \, u_j u_k + p \, \delta_{jk} \Bigr) = 0. 
\endaligned  
\ee
Here, $\rho$ and $u=(u_j)_{1 \leq j \leq n}$ denote the mass density and the 
($n$-dimensional) velocity vector of the fluid
and functions of the variables $(t,x)\in \R_+ \times\R^n$, while the parameter $1/\eps$ represents the light speed
and $\delta_{jk}$ denotes the Kronecker symbol. 
The range of physical interest for the unknown $(\rho,u)$ is defined by 
\be
\label{constraints}
\rho \geq 0, \qquad |u|^2 := \sum_{j=1}^n u_j^2 < \eps^{-2}, 
\ee 
while the pressure $p=p(\rho)$ is assumed to satisfy 
\be
\label{pressure}
0 \leq p'(\rho) < \eps^{-2}.  
\ee 
Under these conditions, it can be checked that the system of conservation laws \eqref{Euler}
is symmetric hyperbolic as long as vacuum is avoided, i.e.~under the restriction $\rho>0$. 
That is, it can be written in the symmetric hyperbolic in the so-called entropy variables
(Makino and Ukai \cite{MakinoUkai1,MakinoUkai2})
$$
\del_t W+ \sum_{k=1}^n A_k(W) \, \del_{x_k} W = 0,  
$$ 
where $W := (\rho,u) \in \R_+ \times B_{1/\eps}$ 
(the set $B_{1/\eps}$ being the open ball with radius $1/\eps$)
and such that, for every unit vector $\nu= (\nu_k) \in \Rn$, 
the matrix $\sum_{k=1}^n \nu_k A_k(W)$ admits real eigenvalues and a basis of eigenvectors. 
It is also established in \cite{MakinoUkai1,MakinoUkai2}.
the initial value problem with non-vacuum initial data, i.e. 
\be
\label{data}
(\rho, u)(0, \cdot) = (\rhob, \ub),  
\ee
when the initial data $(\rhob, \ub) \in \R_+ \times B_{1/\eps}$ is bounded away from vacuum, 
admits a local-in-time solution.

In the present paper, we are interested in the symmetrization and 
the local-in-time existence for the relativistic fluid equations \eqref{Euler}
when the initial data $(\rhob, \ub)$ take arbitrary values in $\R_+ \times B_{1/\eps}$ 
and are allowed to contain vacuum states.  
Some partial but pioneering results were obtained on this problem by Rendall \cite{Rendall}
and Guo and Tahvildar-Zadeh \cite{GTZ}.  
For an overview of the standard theory, we refer to the relevant the chapter on the Euler equations 
in Choquet-Bruhat's book \cite{ChoquetBruhat}. 
 
We emphasize that compactly supported solutions are important in the applications, for instance 
to model the dynamics of stars. 
However, the current existence theory does not cover this situation; indeed,  
the transformation proposed in \cite{MakinoUkai1,MakinoUkai2} 
does not apply for our purpose since the coefficients $A_k(W)$ therein blow-up near the vacuum. 
On the other hand, 
when $\eps=0$, the system \eqref{Euler} reduces to the non-relativistic Euler equations, for which 
local existence of solutions, even solutions containing vacuum states, was established by 
Makino, Ukai, and Kawashima \cite{MakinoUkaiKawashima}. The objective of the present paper is 
precisely 
to provide a suitable generalization of the theory in \cite{MakinoUkaiKawashima} to 
encompass relativistic fluids. 
This will be achieved by introducing yet another symmetrization 
which significantly differs from previously proposed ones. 
  
An outline of this paper is as follows. In Section~\ref{sym}, we begin our investigation of the Euler equations
and derive a first version of our symmetrization, which is based on using as main unknowns 
``generalized Riemann invariants'' and a ``normalized velocity''. 
Then, in Section~\ref{Lore}, we recall basic material on Lorentz transformations and 
establish a technical lemma. 
Next, in Section~\ref{exist} we are in a position to establish the existence result of this paper.
The main technical difficulty is to check a positive-definiteness property for the symmetric system.  
Finally, in Section~\ref{general} we conclude with some remark about the support of solutions.


\section{Symmetrization of the relativistic Euler equations}
\label{sym}

\subsection*{Modified mass and velocity variables}

We are going to define new variables defined by nonlinear transformations of the mass density and the norm of 
the velocity vector, which allow us to put the relativistic Euler equations
in a symmetric form. To begin with, we introduce the {\sl modified mass density} variable $w$ by 
\be
\label{var-w}
\aligned
& w = w(\rho) : = \int_0^\rho {c(s) \over q(s) } \, ds, 
\\
& c(\rho):= \sqrt{p'(\rho)}, \qquad q(\rho) := \rho + \eps^2 \, p(\rho),
\endaligned
\ee
where $c(\rho):= \sqrt{p'(\rho)}$ represents the sound speed in the fluid.  

From now on, we assume that $w(\rho)$ defined above is {\sl finite.}   
This is the case if, near the vacuum, the equation of state is 
asymptotic to the one of polytropic perfect fluids, i.e. $p(\rho) \sim k \, \rho^\gamma$ 
with $\gamma >1$ and $k>0$. In the special case $p(\rho) = k \, \rho^\gamma$
and when $\eps$ is taken to tend to $0$, then the function $w(\rho)$ approaches 
$\rho^{(\gamma-1)/2}$ (up to a multiplicative constant), which precisely coincides with the function 
introduced in \cite{MakinoUkaiKawashima} in the non-relativistic case.

In addition, based on the norm $|u|$ of the velocity vector $u$, we define the {\sl modified velocity scalar} 
\be
\label{var-v}
v = v(|u|) := {1 \over 2 \eps} \, \ln\Big( {1 + \eps \, |u| \over 1 - \eps \, |u|} \Big). 
\ee
We also introduce the $n$-dimensional, {\sl normalized velocity vector} and the 
associated {\sl projection operator} 
\be
\label{var-ut} 
\ut := {u \over |u|}, \qquad E(u) := I - \ut \otimes \ut,
\ee 
respectively, where $I$ denotes the $n\times n$ identity matrix. 
Observe that $E(u)$ is singular as a function of $u$, when $u$ is close to origin;
however, the map $|u|^2 \, E(u)$ is actually smooth.

In the rest of this section, $(\rho,u)$ denotes a given smooth solution to \eqref{Euler}.

\begin{proposition} [Formulation in terms of the modified mass and velocity variables]
\label{prop1}
The relativistic Euler equations are equivalent to the following system in the variables 
$(w, v, \ut)$: 
$$ 
\aligned 
 \big(1 - \eps^4 |u|^2 c(\rho)^2 \big) \, \del_t w 
   &+ \big(1 - \eps^2 \, c(\rho)^2 \big) \, u \cdot \nabla w
   \\&
+ c(\rho) \, (1 - \eps^2 |u|^2) \, \ut \cdot \nabla v 
+ c(\rho) \, |u| \nabla \cdot \ut = 0,  
\\
 \big(1 - \eps^4 |u|^2 c(\rho)^2 \big) \, \del_t v   &+ c(\rho) \, (1 - \eps^2 |u|^2) \, \ut \cdot \nabla w 
 \\&
    + \big(1 - \eps^2 \, c(\rho)^2 \big) \, u \cdot \nabla v
-\eps^2 c(\rho)^2 \, |u|^2 \nabla \cdot \ut = 0,  
\\
 (1 - \eps^2 |u|^2)^{-1} \, |u| \, \big( \del_t &\ut + u\cdot \nabla \ut \big) 
    + c(\rho) \, E(u) \, \nabla w =0.  
\endaligned 
$$
\end{proposition}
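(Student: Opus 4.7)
The plan is to derive the stated system by first expanding \eqref{Euler} into a quasilinear non-conservative form, then introducing the new variables $(w, v, \ut)$ via the chain rule, and finally diagonalizing the time derivatives of $w$ and $v$ by a small linear combination. The projection $E(u)$ will naturally handle the transverse velocity components.

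Setting $H := (\rho + \eps^2 p)/(1-\eps^2|u|^2)$ and $q := \rho + \eps^2 p$, I expand the momentum equations in \eqref{Euler} and subtract $u_j$ times the (expanded) energy equation, using $\del_t p = c^2 \del_t \rho$ and $\nabla p = c^2 \nabla \rho$. This produces the quasilinear vector equation
$$H(\del_t u + u \cdot \nabla u) + c^2 \nabla \rho + \eps^2 c^2 u \, \del_t \rho = 0.$$
I then project this equation onto $\ut$ and onto $E(u)$. The tangential projection, after using $\ut \cdot \del_t u = \del_t |u|$ and $\ut \cdot (u \cdot \nabla u) = u \cdot \nabla |u|$ together with the chain rule identities $d\rho = (q/c)\, dw$ and $d|u| = (1-\eps^2 |u|^2)\, dv$ (and the key simplification $H(1-\eps^2|u|^2) = q$), yields a compact scalar equation--call it (M1)--relating $\del_t v$, $\del_t w$, $\ut \cdot \nabla w$, and $u \cdot \nabla v$. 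The orthogonal projection is simpler: using $E(u) u = 0$, $E(u) \del_t u = |u| \del_t \ut$, and $E(u)(u \cdot \nabla u) = |u|\, u \cdot \nabla \ut$, the term $\eps^2 c^2 u\, \del_t \rho$ drops out and the third equation of the proposition is recovered verbatim.

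Next I treat the energy equation. Expanding $\del_t H$ and $\nabla \cdot (Hu)$ via the chain rule, handling the $-\eps^2 \del_t p = -\eps^2 c^2 \del_t \rho$ term, and splitting $\nabla \cdot u = |u|\, \nabla \cdot \ut + (1-\eps^2|u|^2)\, \ut \cdot \nabla v$, then multiplying through by $c(1-\eps^2|u|^2)/q$, yields a scalar equation--call it (E1)--whose coefficient of $\del_t w$ is $1+\eps^4 c^2 |u|^2$, which is not yet what is wanted.

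The final step is diagonalization: (E1) and (M1) form a $2 \times 2$ linear system in $(\del_t w, \del_t v)$ whose determinant equals exactly $1 - \eps^4 c^2 |u|^2$. Taking the combinations $(\text{E1}) - 2\eps^2 c |u|\, (\text{M1})$ and $(1+\eps^4 c^2 |u|^2)\, (\text{M1}) - \eps^2 c|u|\, (\text{E1})$ yields the first and second equations of the proposition, respectively. The main obstacle is the algebraic bookkeeping, and the one genuinely non-obvious simplification occurs in the coefficient of $u \cdot \nabla v$ in the second equation: after rewriting $u \cdot \nabla v = |u|\, \ut \cdot \nabla v$, the collapse $(1-\eps^4 c^2 |u|^2) - \eps^2 c^2 (1-\eps^2|u|^2) = 1 - \eps^2 c^2$ must be noticed, which folds two apparently distinct terms into the advertised $(1-\eps^2 c^2)\, u \cdot \nabla v$. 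The characteristic factor $1 - \eps^4 c^2 |u|^2$ does not appear in either (E1) or (M1) alone and only emerges after this $2 \times 2$ diagonalization, mirroring the relativistic composition $(1+\eps^2 |u|\, c)(1-\eps^2 |u|\, c)$ of the flow and sound speeds.
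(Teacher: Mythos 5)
Your proposal is correct and follows essentially the same route as the paper: expand the momentum equations minus $u$ times the mass equation to get the vector momentum identity, project it onto $\ut$ and onto $E(u)$, and then eliminate $\partial_t w$ and $\partial_t v$ between the tangential scalar (your (M1), the paper's \eqref{three2}) and the energy scalar (your (E1), the paper's \eqref{two2}) to produce the advertised factor $1-\eps^4 c^2|u|^2$. The only cosmetic difference is that you phrase the final step as inverting a $2\times 2$ linear system in $(\partial_t w,\partial_t v)$, whereas the paper carries out the equivalent successive substitution; the algebra, including the key collapse of the $u\cdot\nabla v$ coefficient to $1-\eps^2 c^2$, is identical.
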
 

\begin{proof} 
\noindent{\it Step 1.} The first equation in \eqref{Euler} takes the form 
$$
\aligned 
& {q'(\rho) \over 1 - \eps^2 \, |u|^2}\, \del_t \rho 
+ q(\rho) {2 \eps^2 \over (1 - \eps^2 \, |u|^2)^2} \, u \cdot \del_t u
       - \eps^2 \, p'(\rho) \, \del_t \rho
\\
& + \sum_{k=1}^n {q'(\rho) \over 1 - \eps^2 \, |u|^2}  \, u_k \del_{x_k} \rho 
    + q(\rho) {2 \eps^2 \over (1 - \eps^2 \, |u|^2)^2} \, u_k \, u \cdot \del_{x_k} u
    + {q(\rho) \over 1 - \eps^2 |u|^2} \, \del_{x_k} u_k = 0,
\endaligned 
$$
or equivalently 
$$
\aligned 
& {1 + \eps^2 |u|^2 (q'(\rho)-1) \over 1 - \eps^2 \, |u|^2} \, \del_t \rho 
   + {q'(\rho) \over 1 - \eps^2 \, |u|^2} \, u \cdot \nabla \rho 
\\
& + {\eps^2 \,  q(\rho) \over (1 - \eps^2 \, |u|^2)^2} \, \big( \del_t |u|^2 + u \cdot \nabla |u|^2 \big)   
    + {q(\rho)\over 1 - \eps^2 |u|^2} \, \nabla \cdot u = 0. 
\endaligned 
$$  
By multiplying this equation by $w'(\rho)$ we find 
$$ 
\aligned 
& {1 + \eps^4 |u|^2 c(\rho)^2 \over 1 - \eps^2 \, |u|^2} \, \del_t w 
   + {1 + \eps^2 \, c(\rho)^2 \over 1 - \eps^2 \, |u|^2} \, u \cdot \nabla w
\\
& + {\eps^2 \, c(\rho) \over (1 - \eps^2 \, |u|^2)^2} \, \big( \del_t |u|^2 + u \cdot \nabla |u|^2 \big)   
    + {c(\rho) \over 1 - \eps^2 |u|^2} \, \nabla \cdot u = 0. 
\endaligned 
$$

To rewrite the above equation in a more convenient form, we observe that
$$
{dv \over d|u|} = {1 \over 1 - \eps^2 \, |u|^2}, 
$$
and so, after further multiplication by $(1 - \eps^2 \, |u|^2)$, the equation for 
the modified mass density reads 
\be
\label{two2} 
\aligned 
\big(1 +\eps^4 |u|^2 c(\rho)^2 \big) \, \del_t w 
   & + \big(1 + \eps^2 \, c(\rho)^2 \big) \, u \cdot \nabla w
\\
& + 2 \eps^2 c(\rho) \, |u| \big( \del_t v + u \cdot \nabla v\big)
    + c(\rho) \, \nabla \cdot u = 0. 
\endaligned 
\ee

\

\noindent{\it Step 2.} Next, we expand the second equation in \eqref{Euler} and obtain 
$$
\aligned 
\del_t \big({q(\rho) \over 1 - \eps^2 \, |u|^2}\big) \, u 
& + 
{q(\rho) \over 1 - \eps^2 \, |u|^2} \, \del_t u 
\\
& + u \, \nabla \cdot \big({q(\rho) \over 1 - \eps^2 \, |u|^2} \, u \big) 
  + {q(\rho) \over 1 - \eps^2 \, |u|^2} \, u\cdot \nabla u 
  + p'(\rho) \nabla \rho = 0, 
\endaligned
$$
which, after using the mass equation, yields 
$$
\aligned 
{q(\rho) \over 1 - \eps^2 \, |u|^2}\, \big( \del_t u + u\cdot \nabla u \big) 
+ p'(\rho) \, \big( \eps^2 \del_t \rho \, u + \nabla \rho \big) = 0. 
\endaligned
$$
Multiplying by $1/q(\rho)$ we arrive at an equation for the velocity vector 
\be
\label{three} 
\aligned 
(1 - \eps^2 \, |u|^2)^{-1} \, \big( \del_t u + u\cdot \nabla u \big) 
+ c(\rho) \, \big( \eps^2 \del_t w \, u + \nabla w \big) = 0. 
\endaligned
\ee  

We now multiply \eqref{three} by the vector $u$ itself, and obtain 
$$
\aligned 
(1 - \eps^2 \, |u|^2)^{-1} \, \big( \del_t |u|^2 + u \cdot \nabla |u|^2 \big) 
    + 2 c(\rho) \, \big( \eps^2 \del_t w \, |u|^2 + u \cdot \nabla w \big) = 0,
\endaligned
$$ 
which, after a further multiplication by $(2 |u|)^{-1}$, becomes
\be
\label{three2} 
\aligned 
\del_t v + u \cdot \nabla v
    + c(\rho) \, \big( \eps^2 |u| \del_t w + \ut \cdot \nabla w \big) = 0. 
\endaligned
\ee  
\

\noindent{\it Step 3.} 
To derive the equation for $\ut$ we multiply \eqref{three} by the projection matrix $E(u)$ and obtain 
$$
(1 - \eps^2 |u|^2)^{-1} \, E(u) \big( \del_t u + u\cdot \nabla u \big) 
+ c(\rho) E(u) \big( \eps^2 \, \del_t w \, u + \nabla w \big) = 0. 
$$
In view of the identities 
$$
\aligned 
E(u) u & = 0,
\\
E (u) \, \del_t u & = \del_t u - \ut \, \del_t |u| = |u| \del_t \ut, 
\\
E(u) \big( u\cdot \nabla u\big) & = |u| u\cdot \nabla \ut, 
\endaligned 
$$
we arrive at the (third) equation for the normalized velocity $\ut$, as stated in the proposition.

Finally, we are in a position to return to the equation \eqref{two2} and,  
by plugging \eqref{three2} in \eqref{two2}, find
$$
\big(1 - \eps^4 |u|^2 c(\rho)^2 \big) \, \del_t w 
   + \big(1 - \eps^2 \, c(\rho)^2 \big) \, u \cdot \nabla w + c(\rho) \, \nabla \cdot u = 0.  
$$ 
Hence, observing that  
$$
\nabla \cdot u = (1 - \eps^2 |u|^2) \, \ut \cdot \nabla v + |u| \nabla \cdot \ut,
$$
we obtain the desired equation for $w$, as stated in the proposition. In turn, we can also put \eqref{three} in the form stated in the proposition
for the function $v$. 
\end{proof}


\subsection*{A symmetric hyperbolic formulation}

At this juncture, it may be interesting to consider the one-dimensional case $n=1$. 
Considering the result in Proposition~\ref{prop1} and setting 
$u_x:=\nabla u$, etc. and then 
observing that (in the one-dimensional case) $E \equiv 0$ and $\ut \equiv 1$,
we obtain the following form of the relativistic Euler equations in the variables $(w,v)$ 
$$
\aligned 
& \big(1 - \eps^4 u^2 c(\rho)^2 \big) \, \del_t w 
   + \big(1 - \eps^2 \, c(\rho)^2 \big) \, u w_x + c(\rho) \, (1 - \eps^2 u^2) \, v_x = 0,  
\\
& \big(1 - \eps^4 u^2 c(\rho)^2 \big) \, \del_t v 
   + c(\rho) \, (1 - \eps^2 u^2) \, w_x   + \big(1 - \eps^2 \, c(\rho)^2 \big) \, u \, v_x = 0,  
\endaligned 
$$
which, obviously, is a symmetric hyperbolic system. In particular, it is obvious that the coefficient 
$1 - \eps^4 u^2 c(\rho)^2$ remains bounded away from zero, provided the sound speed or 
the fluid velocity scalar (or both) remain bounded away from the light speed. 

To derive our symmetrization in general dimension, 
we need two additional observations.
\begin{itemize}
\item First, since $\ut$ has unit norm we can write 
$$
\nabla \cdot \ut 
= \nabla \cdot \ut - \ut \cdot \nabla {|\ut|^2 \over 2} 
= \tr \big( E(u) \, \nabla \ut\big), 
$$
where ``$\tr$" denotes the trace of a matrix. This allows us to rewrite the last term of the $w$-equation
 in Proposition~\ref{prop1}, in the form
$$
c(\rho) \, |u| \nabla \cdot \ut = c(\rho) \, |u| \tr (E(u) \, \nabla \ut). 
$$
Interestingly enough, this term can now be viewed as the ``symmetric counterpart'' of the term 
$$
|u| \, c(\rho) \, E(u) \, \nabla w
$$ 
already contained in the $\ut$-equation in Proposition~\ref{prop1}.

\item At this stage, only one term poses some problem if we are to reach the desired symmetric form, 
that is, the term 
$$
-\eps^2 c(\rho)^2 \, |u|^2 \nabla \cdot \ut
$$
in the $v$-equation of Proposition~\ref{prop1}. 
To compensate for this term, one would need to have 
the term $-\eps^2 c(\rho)^2 \, |u|^2 \, \nabla v$ in the $\ut$-equation, but 
it does not appear that a direct transformation could achieve this. 
So, we introduce a further transformation based based on still some new unknowns:  
\be
\label{var-z}
z_\pm := v \pm w,
\ee
which we will refer to as the {\sl generalized Riemann invariant variables.} 
According to Proposition~\ref{prop1}, we have the equations 
\begin{align*}
& (1 - \eps^4 |u|^2 c(\rho)^2 ) \, \del_t z_\pm 
    + (1 - \eps^2 \, c(\rho)^2) (|u| \pm c(\rho)) \, \ut \cdot \nabla z_\pm
\\ 
& 
\hskip5.cm 
\pm  (1 \mp \eps^2 c(\rho) |u|) \, c(\rho) \, |u| \, \nabla \cdot \ut = 0,  
\\
&(1 - \eps^2 |u|^2)^{-1} \, |u|^2 \big( \del_t \ut + u\cdot \nabla \ut \big)
     + {1 \over 2} |u| \, c(\rho) \, E(u) \, \big( \nabla z_+ - \nabla z_- \big) = 0. 
\end{align*}
 
\end{itemize}

\

Consequently, by combining together the above observations we arrive at the main conclusion of this section: 

\begin{proposition}[Symmetric form of the Euler equations]
\label{FORM1}
In terms of the generalized Riemann invariant variables $(z_+, z_-)$ and the normalized velocity $\ut$
defined in \eqref{var-w}, \eqref{var-v}, \eqref{var-z},  
the relativistic Euler equations take the following symmetric form  
\be
\label{simple-Zp2}
\aligned 
& (1 +\eps^2 |u| c(\rho)) \, \del_t z_+ 
   + {1 - \eps^2 \, c(\rho)^2 \over 1 - \eps^2 c(\rho) |u|} \, (|u|+c(\rho)) \, \ut \cdot \nabla z_+
\\
& \hskip5.cm  + c(\rho) \, |u| \, \tr (E(\ut) \, \nabla \ut) = 0,  
\endaligned 
\ee \be
\label{simple-Zm2}
\aligned 
& (1 - \eps^2 |u| c(\rho)) \, \del_t z_- 
   + {1 - \eps^2 \, c(\rho)^2 \over 1 + \eps^2 c(\rho) |u|} \, (|u| - c(\rho)) \, \ut \cdot \nabla z_-
\\
& \hskip5.cm    - c(\rho) \, |u| \, \tr (E(\ut) \, \nabla \ut) = 0,  
\endaligned 
\ee 
\be
\label{simple-ut3}
{2 \, |u|^2 \over 1 - \eps^2 |u|^2} \, \big( \del_t \ut + u\cdot \nabla \ut \big) 
      + c(\rho) \, |u| \, E(\ut) \, \nabla z_+ - c(\rho) \, |u| \, E(\ut) \, \nabla z_- = 0,
\ee
where the unknowns $z_\pm$ are real-valued and $\ut$ is a unit vector, $|\ut|=1$. 
\end{proposition}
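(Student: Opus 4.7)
The proof amounts to assembling the two observations made just before the statement — on the trace rewriting of $\nabla \cdot \ut$ and on the introduction of the Riemann-type variables $z_\pm = v \pm w$ — and checking that the resulting system is indeed symmetric. The plan is to start from Proposition~\ref{prop1} and proceed in three steps.

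First, since $\ut$ is a unit vector, one has $\ut \cdot \nabla \ut = \tfrac{1}{2}\nabla |\ut|^2 = 0$, so $\nabla \cdot \ut = \tr(E(\ut)\nabla \ut)$ (and $E(u)=E(\ut)$, so the projector is the same as in the $\ut$-equation of Proposition~\ref{prop1}). I would use this identity to recast the divergence terms in the scalar $w$- and $v$-equations into the matrix-trace form required by \eqref{simple-Zp2} and \eqref{simple-Zm2}.

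Second, I would form the sum and difference of the $w$- and $v$-equations of Proposition~\ref{prop1}. Writing $u\cdot\nabla = |u|\,\ut\cdot\nabla$, the cross-terms in $\ut\cdot\nabla w$ and $\ut\cdot\nabla v$ combine into a single coefficient in front of $\ut\cdot\nabla z_\pm$, while the two divergence-of-$\ut$ coefficients $c(\rho)|u|$ and $-\eps^2 c(\rho)^2 |u|^2$ combine as $\pm (1\mp \eps^2 c(\rho)|u|)\,c(\rho)|u|$. Using the factorization $1-\eps^4|u|^2 c(\rho)^2 = (1-\eps^2 c(\rho)|u|)(1+\eps^2 c(\rho)|u|)$ and dividing the $z_\pm$ equation by $(1\mp \eps^2 c(\rho)|u|)$ then produces \eqref{simple-Zp2}--\eqref{simple-Zm2}. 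Third, for the $\ut$-equation I would multiply the third equation of Proposition~\ref{prop1} by $|u|$ and substitute $\nabla w = \tfrac{1}{2}(\nabla z_+ - \nabla z_-)$, so that $c(\rho)\,E(u)\nabla w$ splits into the two terms $\tfrac{1}{2}c(\rho)\,E(\ut)\nabla z_\pm$ of \eqref{simple-ut3} (the factor $\tfrac{1}{2}$ being absorbed by writing the leading coefficient as $2|u|^2/(1-\eps^2|u|^2)$).

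The only point really requiring attention is the symmetry of the final system. This boils down to the matching of the coefficient $c(\rho)|u|$ of $\tr(E(\ut)\nabla \ut)$ in the $z_\pm$-equations with the coefficient of $E(\ut)\nabla z_\pm$ in the $\ut$-equation, with the correct opposite-sign pattern between $z_+$ and $z_-$. That identification is precisely what the trace rewriting from the first step secures; everything else is straightforward algebraic bookkeeping, the main delicate point being the factorization of $1-\eps^4|u|^2 c(\rho)^2$ that produces the characteristic propagation factors $|u| \pm c(\rho)$.
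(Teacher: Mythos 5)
Your proof reproduces the paper's own derivation: (i) use $|\ut|\equiv 1$ so that $\nabla\cdot\ut=\tr\big(E(\ut)\nabla\ut\big)$ and $E(u)=E(\ut)$; (ii) add and subtract the $w$- and $v$-equations of Proposition~\ref{prop1} to get equations for $z_\pm=v\pm w$; (iii) use $1-\eps^4|u|^2c(\rho)^2=(1-\eps^2c(\rho)|u|)(1+\eps^2c(\rho)|u|)$ and divide by $1\mp\eps^2c(\rho)|u|$; (iv) multiply the $\ut$-equation by $2|u|$ and substitute $\nabla w=\tfrac12(\nabla z_+-\nabla z_-)$. The one step you did not actually carry out is tracking the coefficient of $\ut\cdot\nabla z_\pm$: from Proposition~\ref{prop1}, adding (resp.\ subtracting) gives
$(1-\eps^2c(\rho)^2)|u|\pm c(\rho)(1-\eps^2|u|^2)=(|u|\pm c(\rho))(1\mp\eps^2c(\rho)|u|)$,
so after the division in (iii) the convective coefficient simplifies to just $|u|\pm c(\rho)$, not the displayed $\tfrac{1-\eps^2c(\rho)^2}{1\mp\eps^2c(\rho)|u|}(|u|\pm c(\rho))$ of \eqref{simple-Zp2}--\eqref{simple-Zm2}. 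This appears to be a misprint in the proposition (the same expressions recur as $a_1,b_1$ in \eqref{coeff}); since it is a diagonal entry it does not affect symmetry, which depends only on the matching of the off-diagonal $c(\rho)|u|$ coefficients that your trace rewriting correctly secures.
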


Here, the quantity $\rho$ must be regarded as a function of the variable $z_+ - z_- = 2w$, 
where $w$ was defined earlier as a function of $\rho$. On the other hand, $u$ is a 
function of $z_+ - z_-$, namely 
$$
\eps |u| = {e^{\eps (z_+ + z_-)} - 1 \over e^{\eps (z_+ + z_-)} + 1} 
$$

Observe that the above symmetric formulation does allow the density variable to vanish, since the coefficients 
above remain bounded as the density approaches the vacuum. 
However, since the coefficient in front of $\del_t \ut$ in the third equation 
vanishes with $u$, we see that the above formulation requires the velocity 
$u$ to be bounded away from the origin which, of course, is not a realistic assumption to put on
general solutions with vacuum. In Section 4, however, 
we will discuss a reduction of the general initial value problem
which ensures this condition  
after applying a well-chosen Lorentz transformation to an arbitrary solution.

\begin{remark} 
\label{rm2.3} 
The assumed normalization $|\ut|^2 = 1$ could be relaxed in the formulation of the system. 
In fact, if this condition holds at the initial time, then it holds for all times, 
as is clear from the transport equation satisfied by $|\ut|$
$$
\del_t |\ut| + u\cdot \nabla |\ut| = 0,  
$$
which follows from \eqref{simple-ut3} multiplied by $\ut$. 
\end{remark}


\section{Properties of Lorentz transformation}
\label{Lore} 

\subsection*{Transformation formulas}

We will need to rely on the Lorentz invariance property of the relativistic Euler equations and, therefore, in the present section, we collect several technical results about Lorentz transfomations.

For every $U \in \Rn$ with $U \neq 0$, we set $\Ut := U/|U|$ 
and we decompose any vector $x \in \Rn$ in a unique way such that 
$$
x =  x_\para \, \Ut 
+ x_\perp, \qquad x_\para = x \cdot \Ut \in \R, 
\qquad x_\perp \cdot U = 0. 
$$
The {\sl Lorentz transformation} $(t,x) \mapsto (t',x')$ associated with the vector $U$ is then defined by 
$$
\aligned 
t' & = \gamma(U) \, ( t - \eps^2 U \cdot x), 
\\
x_\para' & = \gamma(U) \, (x_\para - U_\para \, t), 
\\
x_\perp' & = x_\perp,
\endaligned 
$$
where 
\[
\gamma(U) =\frac{1}{\sqrt{1 - \eps^2 \, |U|^2}}
\]
 is the so-called Lorentz factor. 
This transformation can be put in an equivalent form 
\be
\label{Lorentz} 
\aligned 
t' & = \gamma(U) \, (t - \eps^2 \, U \cdot x), 
\\
x' & = -\gamma(U) U t +\big(I + (\gamma(U)-1)\Ut \otimes \Ut \big) x. 
\endaligned 
\ee

It may be also convenient to use the modified velocity scalar $V$ 
associated with $U$ (following the definition in the previous section)
and given by 
$$
e^{\eps V} := \gamma(U) \, (1 + \eps \, |U|) = \Big( {1 + \eps |U| \over 1 - \eps |U|} \Big)^{1/2},
$$
and to rewrite the Lorentz transformation as 
$$
\aligned 
& (t' \pm \eps \, \Ut \cdot x') = e^{\mp \eps V} \, (t \pm \eps \, \Ut \cdot x), 
\\
& x_\perp' = x_\perp. 
\endaligned 
$$
or, equivalently, as 
\be
\label{Lorentz2}
\aligned 
t' & = \cosh(\eps V) \, t - \eps \, \sinh(\eps V) \, \Ut \cdot x,   
\\
x' & = - \sinh(\eps V) \, t + \eps \, \cosh(\eps V) \, \Ut \cdot x,
\\
x_\perp' & = x_\perp.
\endaligned
\ee

Recall also that Lorentz transformations together with spatial rotations 
form the so-called {\sl Poincar\'e group of isometries,} 
characterized by the condition that the length element of the Minkowski metric is 
preserved, that is,   
$$
- \eps^{-2} \, {t'}^2 + x'_\para{^2} + |x_\perp'|^2 
= - \eps^{-2} \, t^2 +  x_\para^2 + |x_\perp|^2. 
$$

Recall also that the relativistic Euler equations are invariant under Lorentz transformations
which, for instance, can be checked by direct (tedious) calculations or from more abstract considerations. 
The following transformation rule will also be useful in the following section. 
 
\begin{lemma}[Velocity transformation formula] 
\label{additionlaw} 
Let  $U \in \Rn$ with $U \neq 0$, and denote by $u, u'$ the fluid velocity vectors in 
different coordinate systems $(t,x), (t',x')$ related by the Lorentz transformation \eqref{Lorentz}.  
Then, the transformation law for these velocity vectors is 
\be
\label{addition}
\aligned 
u' & = \frac{1}{1-\eps^2 U\cdot u}
      \Big(-U + \Big( \gamma(U)^{-1} I + (1-\gamma(U)^{-1}) \, \Ut\otimes \Ut \Big) \, u \Big)
   \\
   & =: {1 \over \eps} \Phi(\eps u, \eps U).
\endaligned
\ee
\end{lemma}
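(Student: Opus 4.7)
The plan is to derive the transformation rule by treating the fluid velocity as the tangent slope of a particle worldline and applying the chain rule to \eqref{Lorentz}. Concretely, along a fluid worldline $t \mapsto x(t)$ in the unprimed frame we have $dx = u\, dt$, and the corresponding worldline in the primed frame must satisfy $dx' = u'\, dt'$. So it suffices to compute $dt'$ and $dx'$ from \eqref{Lorentz} along the worldline and take their ratio.

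First I would differentiate the time component: from $t' = \gamma(U)(t - \eps^2 U\cdot x)$ one finds
\[
\frac{dt'}{dt} = \gamma(U)\,(1 - \eps^2 \, U\cdot u).
\]
Note this is nonzero because $|U|,|u|<\eps^{-1}$ forces $\eps^2 |U\cdot u|<1$ by Cauchy--Schwarz, so the transformation is well-defined. Next, differentiating the spatial part of \eqref{Lorentz} along the worldline gives
\[
\frac{dx'}{dt} = -\gamma(U)\,U + \bigl(I + (\gamma(U)-1)\,\Ut\otimes\Ut\bigr)\, u.
\]

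Then $u' = (dx'/dt)/(dt'/dt)$ immediately yields
\[
u' = \frac{1}{1-\eps^2 U\cdot u}\,\Bigl(-U + \gamma(U)^{-1}\bigl(I + (\gamma(U)-1)\,\Ut\otimes\Ut\bigr)\,u\Bigr),
\]
and distributing $\gamma(U)^{-1}$ through the bracketed matrix using the algebraic identity $\gamma(U)^{-1}(\gamma(U)-1) = 1 - \gamma(U)^{-1}$ produces exactly the form stated in \eqref{addition}. The final equality defining $\Phi$ is then just a cosmetic rescaling $(u,U)\mapsto (\eps u,\eps U)$ chosen so that $\Phi$ is a function of the dimensionless velocities.

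There is no real obstacle here: the argument is a single chain-rule computation, and the only things to be careful about are the well-definedness of the denominator (handled by the sub-luminal constraint \eqref{constraints}) and the bookkeeping that separates the parallel and perpendicular components of $u$ when passing the projector $\Ut\otimes\Ut$ through the inverse Lorentz factor. One could alternatively verify the formula by checking it in the decomposition $u = u_\para \Ut + u_\perp$, where the parallel piece recovers the one-dimensional relativistic velocity-addition law and the perpendicular piece picks up only the factor $\gamma(U)^{-1}/(1-\eps^2 U\cdot u)$, but the direct chain-rule route above seems cleanest.
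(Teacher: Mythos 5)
Your argument is exactly the one in the paper: identify $u = dx/dt$ and $u' = dx'/dt'$, differentiate the Lorentz transformation along the worldline, and take the quotient $u' = (dx'/dt)/(dt'/dt)$, finishing with the algebraic identity $\gamma(U)^{-1}(\gamma(U)-1) = 1 - \gamma(U)^{-1}$. The added remark that the denominator is nonzero (by Cauchy--Schwarz and the sub-luminal bounds) is a small but correct bonus not spelled out in the paper.
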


\begin{proof} The fluid velocity vector represents the velocity
of a fictitious point-mass moving along with the fluid. Thus, $u=u(t,x)$ 
represents the velocity vector of a point-mass located at $x$ at the time $t$, 
while $u'=u'(t',x')$ is the velocity of the {\sl same} point-mass
in the coordinate system $(t',x')$. Consequently, the vectors $u$ and $u'$ are given by
\be
\label{uuprime}
u := {dx \over dt}, \qquad u' := {dx' \over dt'}. 
\ee
Now, in view of \eqref{Lorentz},
\be
\aligned
u' & = {dx' \over dt} \, \Big({dt' \over dt} \Big)^{-1}
\\
& = \Big(- \gamma(U)U 
        + \big(I+(\gamma(U) - 1) \Ut \otimes \Ut \big) \, {dx \over dt} \Big)
         \Big( \gamma(U) \, \big( 1 - \eps^2 \, U \cdot {dx \over dt} \big) \Big)^{-1},
\endaligned
\ee 
which, together with \eqref{uuprime}, yields \eqref{addition}. 
\end{proof}


\subsection*{A technical property on the Lorentz-transformed velocity} 

In the following, we will need to have a lower bound on the fluid velocity vector, so we establish here
a preliminary estimate. Throughout, $\eps \in (0,1)$ and all of the constants are {\sl independent} 
of $\eps$. Given $r_0 \in (0,1)$, we define 
$$
B_{r_0} : = \{ \eps u \in\R^3 \ | \ \eps |u| \leq r_0 \}. 
$$

\begin{lemma}[Uniform bounds for the velocity]
\label{lowerbound}
Given any $r_0 \in (0,1)$ and any vector $U \in \R^3$ satisfying $r_0 < \eps |U| < 1$, 
there exist positive constants $0 < \delta_1 < \delta_2 < 1$ depending only on
$r_0$ and $\eps U$, such that the Lorentz transformed velocity \eqref{addition} has a norm uniformly 
bounded away from, both, the origin and the light speed, i.e. 
$$
\delta_1 \le |\Phi(\eps u, \eps U)| \leq \delta_2 
$$
hold for any $\eps u \in B_{r_0}$. 
\end{lemma}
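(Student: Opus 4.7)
The plan is to evaluate $\Phi(\eps u,\eps U)$ explicitly after an orthogonal decomposition, from which both inequalities fall out of a single algebraic identity. Set $\xi := \eps u$ and $\eta := \eps U$, so that the hypotheses read $|\xi|\le r_0 < |\eta| < 1$, and decompose
\[
\xi = \xi_\para\,\widehat\eta + \xi_\perp,\qquad \xi_\para := \xi\cdot\widehat\eta\in\R,\quad \xi_\perp\cdot\eta=0,
\]
with $\widehat\eta := \eta/|\eta|$. On this decomposition, the matrix $\gamma(U)^{-1}I + (1-\gamma(U)^{-1})\Ut\otimes\Ut$ acts as the identity on the $\widehat\eta$-component and as multiplication by $\gamma(U)^{-1} = \sqrt{1-|\eta|^2}$ on $\xi_\perp$, so the formula \eqref{addition} reduces to
\[
\Phi(\xi,\eta) = \frac{1}{1-|\eta|\xi_\para}\,\Big((\xi_\para - |\eta|)\,\widehat\eta + \sqrt{1-|\eta|^2}\;\xi_\perp\Big).
\]

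Next, squaring each orthogonal component and using $|\xi_\perp|^2 = |\xi|^2 - \xi_\para^2$, I would derive the classical identity
\[
1 - |\Phi(\xi,\eta)|^2 \;=\; \frac{(1-|\xi|^2)\,(1-|\eta|^2)}{(1-|\eta|\,\xi_\para)^2},
\]
which is precisely the quantitative form of the fact that Lorentz boosts preserve the subluminal condition $|u|<1/\eps$. The upper bound is then immediate: since $|\xi_\para| \le r_0$, the numerator is at least $(1-r_0^2)(1-|\eta|^2)>0$ while the denominator is at most $(1+r_0)^2$, which yields a positive lower bound on $1-|\Phi|^2$ depending only on $r_0$ and $\eps U$, and hence the constant $\delta_2 \in (0,1)$.

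For the lower bound I would discard the non-negative perpendicular contribution in $|\Phi|^2$ and exploit the gap $|\eta| > r_0 \ge |\xi_\para|$:
\[
|\Phi(\xi,\eta)|^2 \;\ge\; \frac{(\xi_\para - |\eta|)^2}{(1-|\eta|\xi_\para)^2} \;\ge\; \frac{(|\eta|-r_0)^2}{(1+|\eta|\,r_0)^2} \;>\; 0,
\]
yielding $\delta_1 = (|\eta|-r_0)/(1+|\eta|r_0) > 0$.

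I do not expect any genuine obstacle here. The only slightly delicate step is the algebraic simplification producing the identity for $1-|\Phi|^2$, where several cross terms must be collected to reveal the factorization $(1-|\xi|^2)(1-|\eta|^2)$. Conceptually, the lower bound just captures the physical fact that composing a slow fluid velocity ($|u|\le r_0/\eps$) with a strictly faster frame velocity ($|U|>r_0/\eps$) via the relativistic addition law cannot produce an arbitrarily small relative velocity.
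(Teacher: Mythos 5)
Your proof is correct and uses essentially the same mechanism as the paper's: both decompose $\eps u$ into components parallel and perpendicular to $U$, compute $|\Phi|^2$ explicitly, and exploit the factorization $1-|\Phi(\xi,\eta)|^2=(1-|\xi|^2)(1-|\eta|^2)/(1-\eta\cdot\xi)^2$ (the paper derives the same identity, just in coordinates $Z=(r_1,0,0)$, in the course of its upper-bound computation). The paper then locates the exact extrema of $|\Phi|^2$ via a short calculus argument, yielding the sharp constants $\delta_1=(r_1-r_0)/(1-r_0r_1)$ and $\delta_2=(r_0+r_1)/(1+r_0r_1)$, whereas you bound numerator and denominator crudely and obtain the weaker (but still valid) $\delta_1=(r_1-r_0)/(1+r_0r_1)$ — precisely the ``obvious lower bound'' the paper mentions in passing before improving it — and a slightly larger $\delta_2$; since the lemma only claims existence of such constants, this loss of sharpness is immaterial.
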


We observe that the above statement is sharp, in the sense that 
the constants $\delta_1, \delta_2$ may approach the endpoints of the interval $(0,1)$ when 
$r_0$ also approaches the endpoints of $(0,1)$ or when 
$U$ approaches the endpoints of the interval $(r_0,1)$.

\begin{proof} To simplify the notation, we introduce the new variables and function 
$$
X := \eps \, u, \quad Z := \eps \, U, \quad W(X,Z) := |\eps \, u'|^2 = |\Phi(\eps u, \eps U)|^2.  
$$
It is convenient to choose the coordinate system so that
$$
Z = (r_1,0,0), \qquad 0<r_0<r_1<1.
$$

Setting $X=(X_1,X_2,X_3)$ and noting that $\Zt\otimes \Zt X=(X_1,0,0)$, we get
\begin{align}\label{W}
W(X,Z)
& = \Big|\frac{1}{1-X\cdot Z} \Big(\gamma(U)^{-1} X+(1-\gamma(U)^{-1})\Zt\otimes \Zt X - Z\Big)\Big|^2
\\ \notag
& = {1 \over (1-r_1X_1)^2} \, \Big( (X_1-r_1)^2+\gamma(U)^{-2} \, (X_2^2+X_3^2)\Big).
\end{align}
The norm $|Z|=r_1$ being fixed, we are going now to compute the extremum values of the function 
$W$ within the domain $|X| \leq r_0$.

We first compute the minimum by noting that
$$
W(X,Z) \geq \Big( {X_1-r_1 \over 1 - r_1X_1} \Big)^2 =: g(X_1).
$$
An obvious lower bound is $W(X,Z) \geq (r_1 - r_0)^2 (1 + r_1 r_0)^{-2}$. 
To obtain an optimal bound, we compute 
$$
\aligned 
g'(X_1)  
& = 2 \, \frac{X_1-r_1}{1-r_1X_1} \frac{1-r_1^2}{ (1-r_1X_1)^2},
\endaligned
$$
which is negative for any $|X_1|\le r_0<r_1$. Consequently, the minimum
is attained at $X_1=r_0$, and
\be
\aligned
\eps |u'| & = W(X,Z)^{1/2} \geq  g(r_0)^{1/2}
\\
& = {r_1-r_0 \over 1-r_0r_1} =: \delta_1>0.
\endaligned
\ee

Next, to compute the maximum, we set $|X| = r \leq r_0$. Noting that
$\gamma(U)^{-2} = 1 - |Z|^2 = 1 - r_1^2$, we get
$$
\aligned
(&X_1 -r_1)^2+ \gamma(U)^{-2} (X_2^2+X_3^2)
\\
& = (1-X_1r_1)^2+X_1^2-1+r_1^2-r_1^2X_1^2+(1-r_1^2)(X_2^2+X_3^2)
\\
& = (1-X_1r_1)^2+X_1^2(1-r_1^2)-(1-r_1^2)+(1-r_1^2)(X_2^2+X_3^2)
\\
& = (1-X_1r_1)^2-(1-r_1^2)(1-|X|^2)
\\
& = (1-X_1r_1)^2-(1-r_1^2)(1-r^2) 
\endaligned 
$$
and, therefore,
$$
W(X,Z) = 1 - h(X_1,r), 
\qquad 
h(X_1,r) = {(1-r_1^2)(1-r^2) \over (1-X_1r_1)^2}.
$$

Clearly, for each fixed $r\in(0,r_0]$, the function $h(X_1,r)$ attains its minimum value 
at the point $X_1=-r$, so that 
$$
\aligned 
W(X_1,Z) \leq 1-h(-r,r) 
& = {(1+rr_1)^2-(1-r_1^2)(1-r^2) \over (1+rr_1)^2}
\\
& = {(r+r_1)^2 \over (1+rr_1)^2} =: k(r).
\endaligned 
$$
Since
$$
\aligned 
k'(r)  
& = 2 \, {(r+r_1)(1-r_1^2) \over (1+rr_1)^3}>0, 
\endaligned 
$$
for all $r \geq 0$ we have
$$
\aligned 
W(X,Z)
& \leq k(r_0) = {(r_0+r_1)^2 \over (1+r_0r_1)^2}
\\
& \leq k(1) = 1.
\endaligned 
$$ 
Finally, by choosing $\delta_2 := k(r_0)^{1/2}$ we obtain the desired inequality 
and the proof of the lemma is completed.
\end{proof} 


\subsection*{A symmetrization for non-relativistic fluids} 

For clarity, let us explain our strategy to avoid the zelo velocity problem
in the simpler case of the {\sl non-relativistic} 
Euler equations
\be
\label{Euler11}
\aligned 
&\del_t \rho 
  + \sum_{k=1}^n \del_{x_k} \big( \rho \, u_k \big) = 0,
\\
& \del_t \big( \rho \, u_j \big) 
  + \sum_{k=1}^n \del_{x_k} \bigl( \rho \, u_j u_k + p \, \delta_{jk} \bigr) = 0. 
\endaligned  
\ee
Since this is just \eqref{Euler} 
for the limit case $\eps\to 0$, and since the symmetrization argument of Section~\ref{sym}
is still valid for this case, setting $\eps=0$ in Proposition~\ref{FORM1} 
yields a symmetrization of \eqref{Euler11},
in the form
\begin{align}
\notag
& \del_t z_+ 
   +  (|u|+c(\rho)) \, \ut \cdot \nabla z_+
+ c(\rho) \, |u| \, \tr (E(\ut) \, \nabla \ut) = 0,  
\\
\label{simple-Zm2-11}
&  \del_t z_- 
   +  (|u| - c(\rho)) \, \ut \cdot \nabla z_-  
- c(\rho) \, |u| \, \tr (E(\ut) \, \nabla \ut) = 0,  
\\&\notag
2 \, |u|^2 \big( \del_t \ut + u\cdot \nabla \ut \big) 
      + c(\rho) \, |u| \, E(\ut) \, \nabla z_+ - c(\rho) \, |u| \, E(\ut) \, \nabla z_- = 0.
\end{align}
This symmetrization still has the drawback of having a possibly vanishing coefficient 
(the velocity) in the third equation.

Now, recall that the non-relativistic Euler equations \eqref{Euler11}
are invariant under Galilean transformations and
introduce the coordinate frame translated at some given velocity $U$. 
We denote the new variables and unknowns
with the symbol $\sharp$, that is, 
\be
\label{sharp}
t^\sharp: =t, \qquad x^\sharp:= x-Ut, 
\qquad 
\rho^\sharp:= \rho, \qquad u^\sharp :=u-U.
\ee
We first obtain, by virtue of the Galilean invariance, 
\be
\label{Euler12b}
\aligned 
&\del_t^\sharp \rho^\sharp + 
\sum_{k=1}^n \del_{x_k}^\sharp \big( \rho^\sharp \, u_k^\sharp \big)= 0,
\\
& \del_t^\sharp  \big( \rho^\sharp  \, u^\sharp \big)  
  + \sum_{k=1}^n \del_{x_k}^\sharp
 \bigl( \rho^\sharp \, u_j^\sharp u_k^\sharp + p^\sharp \, \delta_{jk} \bigr) = 0.
\endaligned 
\ee
and then we note that the following 
symmetrization of \eqref{Euler12b} can be deduced from the same  argument as above:
\be
\label{simple-Zm2-12b}
\aligned 
& \del_t^\sharp z_+^\sharp 
   +  (|u^\sharp|+c(\rho^\sharp)) \, \ut^\sharp \cdot \nabla^\sharp z_+^\sharp
+ c(\rho^\sharp) \, |u| \, \tr (E(\ut^\sharp) \, \nabla^\sharp \ut^\sharp) = 0,  
\\
&  \del_t^\sharp z_-^\sharp 
   +  (|u^\sharp| - c(\rho^\sharp)) \, \ut \cdot \nabla z_-^\sharp
- c(\rho^\sharp) \, |u^\sharp| \, \tr (E(\ut^\sharp) \, \nabla^\sharp \ut^\sharp) = 0,  
\\
&
2 \, |u^\sharp|^2 \big( \del_t \ut^\sharp + u^\sharp\cdot \nabla^\sharp \ut^\sharp \big) 
      + c(\rho^\sharp) \, |u^\sharp| \, E(\ut^\sharp) \, \nabla^\sharp z_-^\sharp+  c(\rho^\sharp) \, |u^\sharp| \, E(\ut^\sharp) \, \nabla^\sharp z_-^\sharp = 0.
\endaligned
\ee
Now, the advantage of the symmetrization  \eqref{simple-Zm2-12b}, in comparison with 
 \eqref{simple-Zm2-11}, is obvious: In view of  \eqref{sharp}, 
$u^\sharp$ never vanishes as long  as $u$ remains bounded, 
provided 
the reference velocity $U$ can be chosen so that, say, $2 \, |u| \leq |U|$. 
In turn, Kato's theory ensures the local well-posedness for the system \eqref{simple-Zm2-12b} and,
 as a consequence, for the original system \eqref{simple-Zm2-11}.

In the next section, we will  see that the same strategy works for the relativistic case
provided Galilean transformations are replaced by Lorentz transformations.


\section{Local-well-posedness theory}
\label{exist}

Relying on the symmetric form discovered in Proposition~\ref{FORM1} we are now ready to 
establish the main results of the present paper. We denote here by $H_{ul}^r(\Rn)$ the 
uniformly local Sobolev space of order $r \geq 0$. (Recall that, by definition, the Sobolev norm in these spaces
is computed on unit balls with arbitrary center varying in $\Rn$.)  

\begin{theorem}[Local-in-time solutions in Sobolev spaces] 
\label{theo1}
Consider the relativistic Euler equation for an equation of state $p=p(\rho)$ satisfying the 
hyperbolicity condition \eqref{pressure} together with the following condition near the vacuum  
\be
\label{growth}
\limsup_{\rho \to 0 \atop \rho>0} {c(\rho) \over w(\rho)} < \infty. 
\ee
For every constant $M>0$, there exists $\kappa \in (0,1)$ such that the following property holds.  
Given at $t=0$ an initial data $\rhob, \ub$ belonging to the Sobolev space $H_{ul}^r$ with $r >1 + n/2$ 
and satisfying the constraints
$$
0 \leq \rhob \leq M, \qquad \eps^2 \, |\ub|^2 \leq \kappa, 
$$
there exists a unique local solution $\rho, u$ to the corresponding initial-value problem, 
which is defined up to a some maximal time $T>0$ and satisfies 
$$
\rho, u \in C([0,T), H_{ul}^r(\Rn)) \cap C^1([0,T), H_{ul}^{r-1}(\Rn))
$$
and 
$$
\rho \geq 0, \qquad \eps^2 |u|^2 < 1.
$$
\end{theorem}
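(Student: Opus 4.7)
The plan is to reduce the initial value problem, via a Lorentz boost, to one in which the fluid velocity is uniformly bounded away from both the origin and the light speed, and then to apply a standard quasilinear existence theorem to the symmetric form \eqref{simple-Zp2}--\eqref{simple-ut3}. The main obstruction to invoking Proposition~\ref{FORM1} directly is the coefficient $2|u|^2/(1-\eps^2|u|^2)$ of $\del_t \ut$ in \eqref{simple-ut3}, which degenerates as $|u|\to 0$; the boost is introduced precisely to prevent this.

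First, given $M$, one fixes $\kappa\in(0,1)$ small enough and selects a reference velocity $U\in\Rn$ with $\sqrt{\kappa} < \eps|U| < 1$. Lorentz invariance of \eqref{Euler} allows us to pass from the Cauchy problem with data $(\rhob,\ub)$ on $\{t=0\}$ to an equivalent Cauchy problem in the new coordinates \eqref{Lorentz}, with transformed data $(\rhob',\ub')$ posed on the spacelike hyperplane $\{t-\eps^2 U\cdot x=0\}$. By Lemma~\ref{lowerbound}, the transformed velocity $\ub'$, computed through the velocity-addition formula \eqref{addition}, satisfies the two-sided bound $\delta_1\leq\eps|\ub'|\leq\delta_2$ uniformly in $x'$, with $0<\delta_1<\delta_2<1$ depending only on $\kappa$ and $\eps U$.

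In the boosted frame, the next step is to rewrite the system in the variables $(z_+,z_-,\ut)$ of \eqref{var-w}--\eqref{var-z} using the symmetric form \eqref{simple-Zp2}--\eqref{simple-ut3}. The resulting quasilinear system takes the shape $A_0\,\del_t V+\sum_k A_k\,\del_{x_k}V=0$ with $V=(z_+,z_-,\ut)$: symmetry of the $A_k$ is built into \eqref{simple-Zp2}--\eqref{simple-ut3}, and $A_0$ is block-diagonal with blocks $1+\eps^2|u|c(\rho)$, $1-\eps^2|u|c(\rho)$, and $\bigl(2|u|^2/(1-\eps^2|u|^2)\bigr)I$. The first two blocks are bounded below by positive constants thanks to \eqref{pressure} and $\eps|u|<1$; the third is bounded below precisely because of the lower bound on $|u|$ delivered by Lemma~\ref{lowerbound}. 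The growth assumption \eqref{growth} ensures that $c$ is a Lipschitz function of $w$ up to the vacuum, so $A_0$ and the $A_k$ are $C^1$ functions of $V$ over the relevant phase region, including $\rho=0$.

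With these uniform bounds on the symmetrizer, Kato's theorem for symmetric hyperbolic systems in the uniformly local spaces $H_{ul}^r$ with $r>1+n/2$, as adapted to non-decaying data in \cite{MakinoUkaiKawashima}, produces a unique local solution $V\in C([0,T),H_{ul}^r)\cap C^1([0,T),H_{ul}^{r-1})$. Remark~\ref{rm2.3} shows that $|\ut|=1$ is propagated, so $\rho'$ is recovered from $w=(z_+-z_-)/2$ via the inverse of \eqref{var-w}, $|u'|$ from $v=(z_++z_-)/2$ via the inverse of \eqref{var-v}, and $u'=|u'|\ut$; inverting the Lorentz map together with \eqref{addition} then restores $(\rho,u)$ in the original frame with the stated regularity. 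The main obstacle, flagged in the introduction, is the quantitative positive-definiteness of $A_0$ uniformly on the relevant phase region, which is exactly what Lemma~\ref{lowerbound} buys us; one must also confirm that the admissible range for $\eps|U|$ is nonempty under the smallness condition on $\kappa$ and that the change of slicing to the tilted spacelike hyperplane is compatible with the $H_{ul}^r$ norms used in the existence theory.
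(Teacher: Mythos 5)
Your overall strategy (boost to bound the velocity away from zero, pass to $(z_+,z_-,\ut)$, invoke Kato) matches the paper, and Lemma~\ref{lowerbound} is indeed the right tool for Step~1. However, there are two concrete gaps.

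First, and most seriously, you misplace where the real difficulty lies. You observe that after the boost the data sits on the tilted hyperplane $\Hcal_0' : t' = -\eps^2\, U\cdot x'$, but you then argue positive-definiteness for the matrix $A_0(W')$ in the boosted frame and relegate the hyperplane issue to ``compatibility with $H_{ul}^r$ norms.'' That is not the obstruction. Positive-definiteness of $A_0(W')$ only helps if data are prescribed on $\{t'=0\}$, which is not the case. To pose the problem on $\Hcal_0'$ one must flatten it, e.g.\ via $t'' = t' + \eps^2 U\cdot x'$, $x'' = x'$, and the system becomes $B_0 \del_{t''} W'' + \sum_j B_j \del_{x''_j} W'' = 0$ with $B_0 = A_0(W') + \eps^2 \sum_j U_j A_j(W')$. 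Unlike $A_0$, the matrix $B_0$ is \emph{not} block-diagonal: the off-diagonal blocks of the $A_j$ (the $a_2 |u| e_j$ terms coupling $z_\pm$ to $\ut$) survive, and positive-definiteness of $B_0$ requires controlling cross-terms $Q_{13}, Q_{23}$ against the diagonal blocks. This is exactly the ``main technical difficulty'' advertised in the introduction and worked out in Lemma~\ref{455}, where one must tune $\kappa$ (equivalently $r_*$) and $|U|$ jointly so that a Schur-type determinant condition $D_*>0$ holds. Your proof treats this as a routine remark to verify; in fact it is the substance of the argument and does not follow from Lemma~\ref{lowerbound} alone.

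Second, the theorem asserts $\rho\geq 0$ for $t>0$, and this is not automatic from Kato's theorem: the symmetric system is solved in $(z_+,z_-,\ut)$ variables and nothing there a priori prevents $w=(z_+-z_-)/2$ from going negative. The paper closes this (Step~4) by integrating the $w$-equation along fluid characteristics and using \eqref{growth} in the form $c(\rho)\leq C_R w(\rho)$ to derive $w(t,y(t)) = w(0,y(0))\, e^{O(1)t}$, hence $w\geq 0$. You use \eqref{growth} only to assert smoothness of the coefficients through the vacuum (a somewhat different claim than what \eqref{growth} literally gives, which is a bound $c\lesssim w$, not Lipschitz regularity of $c$ in $w$), and you never return to the sign of $\rho$. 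Both points need to be addressed for the proposal to be a complete proof.
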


The rest of this section is devoted to the proof of Theorem~\ref{theo1}.  

\

\noindent{\it Step 1.}  
By assumption, the initial velocity scalar is bounded away from the light speed and, in consequence, 
thanks to Lemma~\ref{lowerbound}, we can find a vector $U \in \Rn$ with sufficiently large norm $|U|$ such that 
the transformed fluid velocity $\ub'$, defined as in \eqref{addition} by 
\be
\label{additionb}
\aligned 
\ub' 
& = \frac{1}{1-\eps^2 U\cdot \ub}
        \Big(\gamma(U)^{-1}\ub + (1 - \gamma(U)^{-1}) (\Ut\otimes \Ut) \ub - U \Big), 
\\
& = {1 \over \eps} \Phi(\eps \ub, \eps U),
\endaligned
\ee
is bounded and bounded away from the origin. Precisely, for some constants 
$0 < \deltab_1 < \deltab_2 < 1$ we have  
\be
\label{bdvelo}
\deltab_1 \leq \eps |\ub'| \leq \deltab_2. 
\ee

\

\noindent{\it Step 2.} Recall that 
the general theory established by Kato \cite{Kato} covers symmetric hyperbolic systems of the form 
\be
\label{SHS}
A_0(W) \, \del_t W + \sum_{j=1}^n A_j(W) \, \del_j W  = 0, 
\ee
where the $(d\times d)$-matrix fields $A_0, A_j$ are real-valued and symmetric 
with regular coefficients, and the matrix $A_0$ is uniformly positive definite. 
For the system \eqref{simple-Zp2}--\eqref{simple-ut3}, we have $d=n+2$, 
$W=(z_+,z_-,\ut )^t$ (a column vector, the subscript ``$t$"  standing for transposition), and
\begin{align}\label{matrix}
A_0(W)=&\begin{pmatrix}
a_0&0&0
\\
0&b_0&0
\\
0&0&c_0|u|^2 \, I
\end{pmatrix},\quad
A_j(W)=
\begin{pmatrix}
a_1\ut_j&0&a_2|u|e_j
\\
0&b_2\ut_j&- a_2|u|e_j
\\
a_2|u|e_j^t&-a_2|u|e_j^t&c_0|u|^2 u_j \, I
\end{pmatrix},
\end{align}
where
\be
\label{coeff}
\aligned 
& a_0 =1 + \eps^2 |u| c(\rho), \qquad  b_0 = 1 - \eps^2 |u| c(\rho),
 \qquad c_0 = {2 \over 1 - \eps^2 |u|^2},  
\\
& a_1 = {1 - \eps^2 \, c(\rho)^2 \over 1 - \eps^2 c(\rho) |u|} \, (|u| + c(\rho)), 
\qquad 
b_1 = {1 - \eps^2 \, c(\rho)^2 \over 1 + \eps^2 c(\rho) |u|} \, (|u| - c(\rho)),
\\
& a_2 = c(\rho), \qquad e_j = (E_{j1}(u), E_{j2}(u), \ldots E_{jn}(u)),
\endaligned
\ee 
and we recall that $I = (\delta_{ij})$ denotes the $n$-dimensional identity matrix. 
Recall that we are interested in the initial-value problem associated with \eqref{SHS} 
where initial data are prescribed on the initial hyperplane
$$
\Hcal_0 : \quad  t=0.
$$

First,  observe that 
\be
\label{A0}
       \big\langle A_0(W) \, \xi, \xi \big\rangle = a_0 \, |\xi_1|^2 + b_0 \, |\xi_2|^2 + c_0 |u|^2 \, |\xih|^2,
\ee
where $\langle\cdot, \cdot \rangle$ denotes the Euclidian inner product in $\R^{n+2}$ and
$$
\xi = (\xi_1,\xi_2,\ldots, \xi_{n+2}) = (\xi_1, \xi_2, \xih )
\in \R^{n+2},  \qquad \xih=(\xi_3,\ldots,\xi_{n+2})\in \R^n.
$$
As was already noted before Remark~\ref{rm2.3} and is readily seen directly 
from \eqref{coeff}, 
the matrix 
 $A_0$ can be positive definite only if the velocity $u$ never vanishes.  
In other words, provided the initial velocity $\ub$ is bounded away from $0$, 
according to Kato's theory,
a local-in-time solution exists and is unique 
in the uniformly local Sobolev space $H^s_{ul}$ for $s > 1+n/2$. 
As stated in Section 1, however, this lower bound on the velocity is not physically realistic.  

On the other hand, from the physical view point,  
the zero velocity is not a special value and, in any case, 
one should be able to recover the strict positivity property for the matrix $A_0$. 
At this juncture,
recalling the strategy presented at the end of the preceding section for the 
non-relativistic Euler equations,
 we propose ourselves to apply a Lorentz transformation. 

Using the Lorentz invariance property (Lemma~\ref{additionlaw}) of the Euler equations, 
we see that the symmetric formulation \eqref{simple-Zp2}--\eqref{simple-ut3} 
can be also expressed in the transformed coordinates $(t',x')$ defined by \eqref{Lorentz}, that is,
\be
\label{SHSprime}
A_0(W') \, \del_{t'} W' + \sum_{j=1}^n A_j(W') \, \del_{x_j'} W'  = 0, 
\ee
where  $W'=(z'_+, z'_-, \ut') $ is defined from the transformed unknowns $(\rho', u')$. 
(Of course, the mass density remains unchanged but is now regarded as a function of $(t',x')$.)

After this transformation, the expression \eqref{A0} becomes 
\be
\label{A0prime}
\big\langle A_0(W') \, \xi, \xi \big\rangle = a_0' \, |\xi_1|^2 + b_0' \, |\xi_2|^2 + c_0' |u'|^2 \, |\xih|^2,
\ee
with $a_0',b_0',c_0'$ defined by \eqref{coeff} with $(\rho,u)$ replaced by $(\rho',u')$. 
In view of the lower and upper bounds \eqref{bdvelo}, we conclude that the transformed matrix $A_0(W')$ 
is {\sl positive definite in the coordinate system $(t',x')$.} 
Hence, Kato's theory applies to the initial value problem for \eqref{SHSprime}, 
{\sl without} any assumption on the fluid velocity, but provided initial data are imposed on the 
initial hypersurface $t'=0$. 

In contrast to the non-relativisitic case, however, this is not the end of our discussion, 
since $t'=0$ is not the hypersurface of interest. This 
is due to the fact that, in the relativistic setting, 
the initial plane $\Hcal_0$ is not preserved by the transformation \eqref{Lorentz}. 

\

\noindent{\it Step 3.} In fact, the initial hyperplane $\Hcal_0$ is mapped, in the new coordinate system $(t',x')$,
 to the ``oblique'' hyperplane 
$$
\Hcal_0' : \qquad t' = -\eps^2 \, U \cdot x'.
$$
In order to prove local well-posedness for the oblique initial-value problem to \eqref{SHSprime}
with data prescribed on $\Hcal_0'$, it is convenient to introduce a further change of coordinates 
\be
\label{newvar}
t'' = t' + \eps^2 \, U \cdot x',         \qquad x'' = x',
\ee
which maps the hyperplane $\Hcal_0'$ to the hyperplane 
$$
\Hcal_0^{''} : \qquad t'' = 0. 
$$
This transformation puts the system \eqref{SHSprime} into the form 
\be
\label{SHSB}
B_0(W'') \, \del_{t^{''}} W'' + \sum_{j=1}^n B_j(W'')\, \del_{x_j^{''}} W''  = 0, 
\ee
where $W''(t'',x'')=W'(t',x')$ and  the new matrix-coefficients
 are
\begin{equation}\label{doubleprime}
\aligned
& B_0(W'') = A_0(W') + \eps^2 \, \sum_{j=1}^n U_j \, A_j(W'), 
\\
& B_j(W'') = A_j(W'), \qquad j=1,\ldots,n.
\endaligned
\end{equation}
(Note in passing that using a Lorentz transformation instead of \eqref{newvar} would be 
physically more natural, but would lead to precisely the same matrix $B_0$ and slightly 
more complicated expression. 

We are now going to establish that the matrix  $B_0(W'')$ is positive definite
for data that have bounded mass density and whose velocity scalar is bounded away from the light speed.  
Provided this is checked, 
Kato's theory then applies to the initial value problem for \eqref{SHSB} on the hyperplane $\Hcal''_0$, 
which is the one of interest.

At this juncture, it is worth recalling the standard fact that the definite-positivity property 
above implies that the oblique hyperplane $\Hcal'_0$ is a non-characteristic hypersurface 
for the hyperbolic system \eqref{SHSprime} which also is sufficient to imply local well-posedness. 
Namely, the matrix $B_0(W'')$ can be written as
$$
B_0(W'') =\sum_{a=0}^n \nu_a A_a(W''),
$$
where the vector 
$\nu=(\nu_0,\ldots,\nu_n)=(1,\eps^2U)\in \R^{n+1}$
is normal to $\Hcal'_0$. Thus, the positivity property of $B_0(W'')$ implies also 
that $\text{det} (B_0(W'')) \not= 0$.

Let us summarize the expressions we need here. An easy computation with \eqref{doubleprime}
shows that 
\be
\aligned
\label{B}
\big\langle B_0(W'') \, \xi, \xi \big\rangle 
= & \big(a_0'+a_1'\eps^2(U\cdot \ut')\big)\xi_1^2 + 2 a _2' \, \eps^2 \, |u'| \big(U\cdot(E(\ut')\xih)\big) \, \xi_1 
\\
& + \big(b_0' + b_1' \eps^2 \, (U\cdot \ut')\big) \, \xi_2^2 
  -2 a_2'\eps^2 \, |u'| \, \big(U\cdot(E(\ut')\xih)\big) \, \xi_2   
\\
& + c_0' |u'|^2\big(1+\eps^2 (U\cdot u')\big) \,  |\xih|^2,   
\endaligned
\ee
where we recall that $\xi = (\xi_1,\xi_2,\ldots, \xi_{n+2})=(\xi_1, \xi_2, \xih ) 
\in \R^{n+2}$. 
The primed quantities $a_0'$ etc. are still defined by \eqref{coeff} but with $u$ 
replaced by $u'$ determined by the Lorentz transformation associated with the reference velocity $U$. 
 
Hence, we can regard the expression 
$$
Q(\xi; \eps u, \eps c(\rho'), \eps U) : = \big\langle B_0(W'') \, \xi, \xi \big\rangle 
$$
as a polynomial in $\xi$ and a nonlinear function in $\eps u, \eps c(\rho'), \eps U$. 
As before, to simplify the notation, we introduce  
$$
X := \eps \, u, \quad 
Y := \eps c(\rho'), 
\qquad 
Z := \eps \, U, 
$$
to deduce 
$$
\aligned
Q(\xi; X,Y,Z) 
= & \Big(a_0' + a_1' \Phit(X,Z) \cdot Z  \Big) \xi_1^2 
   + 2 a_2' \, |\Phi(X,Z)| \Big(Z \cdot (E(\Phi(X,Z)) \xih)\Big) \, \xi_1 
\\
& + \Big( b_0' + b_1' \Phit(X,Z) \cdot Z \Big) \, \xi_2^2 
  -2 a_2' |\Phi(X,Z)| \, \big(U\cdot(E(\Phit(X,Z))\xih)\big) \, \xi_2   
\\
& + c_0' |\Phi(X,Z)|^2\big(1 + (Z \cdot \Phi(X,Z))\big) \,  |\xih|^2,   
\endaligned
$$
where (after appropriate rescaling in $\eps$ for the coefficients $a_1', b_1', a_2'$)  
\be
\label{coeffprime}
\aligned 
& a_0' = 1 + |\Phi(X,Z)| Y, \qquad  b_0' = 1 - |\Phi(X,Z)| Y,  \qquad  c_0' = {2 \over 1 - |\Phi(X,Z)|^2},  
\\
& a_1' = {1 - Y^2 \over 1 - Y |\Phi(X,Z)|} \, (|\Phi(X,Z)| + Y), 
\qquad 
b_1' = {1 - Y^2 \over 1 + Y |\Phi(X,Z)|} \, (|\Phi(X,Z)| - Y),
\\
& a_2' = Y, 
\endaligned
\ee 
with $\Phit(X,Z) := \Phi(X,Z)/|\Phi(X,Z)|$.
Replacing the coefficients $a_0', a_1'$, etc by their values, we finally obtain 
\be
\label{Q}
Q(\xi; X,Y,Z) 
 = Q_{1} \xi_1^2 + Q_{2} \, \xi_2^2 + Q_{3} \, |\xih|^2 + Q_{13}( \xi_1, \xih) + Q_{23}( \xi_2, \xih), 
\ee
where 
$$
\aligned 
& Q_{1} := 1 + |\Phi(X,Z)| Y + \big( \Phit(X,Z) \cdot Z\big)  {1 - Y^2 \over 1 - Y |\Phi(X,Z)|} \, (|\Phi(X,Z)| + Y), 
\\
& Q_{2} :=  1 - |\Phi(X,Z)| Y + \big( \Phit(X,Z) \cdot Z \big) {1 - Y^2 \over 1 + Y |\Phi(X,Z)|} \, (|\Phi(X,Z)| - Y), 
\\ 
& Q_{3} :=  {2 \, |\Phi(X,Z)|^2 \over 1 - |\Phi(X,Z)|^2} \Big(1 + (Z \cdot \Phi(X,Z))\Big), 
\\
& Q_{13}( \xi_1, \xih) := 2 Y \, |\Phi(X,Z)| \Big(Z \cdot (E(\Phit(X,Z)) \xih)\Big) \, \xi_1 
\\
& Q_{23}( \xi_2, \xih) := -2 Y \, |\Phi(X,Z)| \, \big(Z \cdot(E(\Phit(X,Z))\xih)\big) \, \xi_2, 
\endaligned
$$

Recall that $E(\Phit(X,Z)) = I - \Phit(X,Z) \otimes \Phit(X,Z)$. 
We are interested in the range where $X, Z \in \R^n$ have norm bounded away from $1$, 
and $Y$ remains in a bounded closed subset of $[0,1)$.

It remains to check the following {\sl purely algebraic} result. 

\begin{lemma}[Uniform positivity property]
\label{455} 
For any given $Y_0\in(0,1)$, there exist $r_*\in (0,1)$ and $Z\in\R^n, r_*<|Z|<1$
such that 
\be 
\label{posB_0}
Q(\xi, \xi; X,Y,Z) 
\geq c_0 \, |\xi|^2, \qquad \xi\in\R^{n+2} 
\ee
holds for all $Y \in [0,Y_0] $,  $|X|\leq r_*$.
\end{lemma}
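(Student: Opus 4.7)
The plan is to establish uniform positivity of $Q$ first at the special point $X=0$, where everything simplifies explicitly, and then extend to a small ball $|X|\le r_*$ by continuity of all coefficients.

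The key observation is that the velocity transformation law \eqref{addition} gives $\Phi(0,Z)=-Z$, so at $X=0$ one has $|\Phi|=|Z|$ and $\Phit\cdot Z=-|Z|$. Substituting into the expressions for $Q_1,Q_2,Q_3$ given after \eqref{Q} and simplifying yields
$$
Q_1\big|_{X=0}=\frac{1-|Z|^2-Y|Z|(1-Y^2)}{1-Y|Z|},\qquad Q_2\big|_{X=0}=\frac{1-|Z|^2+Y|Z|(1-Y^2)}{1+Y|Z|},
$$
together with $Q_3\big|_{X=0}=2|Z|^2$. Now $Q_2$ and $Q_3$ are manifestly positive for any $|Z|\in(0,1)$ and $Y\in[0,1)$, so only $Q_1$ requires care; since $Y(1-Y^2)\le 1$, the elementary inequality $|Z|^2+|Z|<1$ already guarantees $Q_1>0$ uniformly in $Y\in[0,1)$. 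I would therefore choose and \emph{fix} $Z$ once and for all with, say, $|Z|=s_0\in(0,1/2]$, which gives uniform lower bounds $Q_i\big|_{X=0}\ge 2\eta$ for $i=1,2,3$ with some $\eta=\eta(s_0,Y_0)>0$.

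The cross terms are then absorbed via Young's inequality. Since $E(\Phit)$ is an orthogonal projection, $|Z\cdot E(\Phit)\xih|\le|Z||\xih|$, and splitting with weight $Q_1/2$ yields $|Q_{13}|\le \frac{Q_1}{2}\xi_1^2+\frac{2Y^2|\Phi|^2|Z|^2}{Q_1}|\xih|^2$, with an analogous estimate for $Q_{23}$. Summing gives
$$
Q(\xi;X,Y,Z)\ge \frac{Q_1}{2}\xi_1^2+\frac{Q_2}{2}\xi_2^2+\Big(Q_3-2Y^2|\Phi|^2|Z|^2\Big(\frac{1}{Q_1}+\frac{1}{Q_2}\Big)\Big)|\xih|^2,
$$
and at $X=0$ the bracket on $|\xih|^2$ equals $2|Z|^2\big(1-Y^2|Z|^2(1/Q_1+1/Q_2)\big)$. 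Since $Y\le Y_0<1$ and $Q_1,Q_2\ge 2\eta$, this quantity is positive as soon as $s_0^2<\eta/Y_0^2$; shrinking $s_0$ if necessary (depending on $Y_0$) delivers $Q(\xi;0,Y,Z)\ge c_0|\xi|^2$ uniformly in $Y\in[0,Y_0]$ for some $c_0>0$.

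Finally, with $Z$ so chosen, Lemma~\ref{lowerbound} ensures that $|\Phi(X,Z)|$ stays bounded away from both $0$ and $1$ on $\{|X|\le r_0\}$ for any $r_0<s_0$, so all coefficients $Q_i,Q_{ij}$ are continuous (rational) functions of $(X,Y)$ on the compact set $\{|X|\le r_0\}\times[0,Y_0]$. The strict positivity established at $X=0$ therefore persists on a smaller ball $|X|\le r_*$, possibly with a smaller constant $c_0$. The main obstacle is the analysis of $Q_1$: the term $(\Phit\cdot Z)$ is negative at $X=0$ and tries to destabilize the diagonal entry, which is precisely what forces $|Z|$ to be chosen not too close to $1$ and dictates the moderate size $|Z|=s_0$ (and hence the constraint $r_*<|Z|$ in the statement).
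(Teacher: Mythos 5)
Your proof is correct, and it takes a genuinely different route from the paper's. The paper works directly on the whole ball $\{|X|\le r_*\}$: it uses the two-sided bounds $\Phi_*\le|\Phi(X,Z)|\le\Phi^*$ from Lemma~\ref{lowerbound}, computes explicitly $S:=Z\cdot\Phit(X,Z)\in[-r_1,0]$, derives quantitative lower bounds $q_1,q_2,q_3$ for $Q_1,Q_2,Q_3$ and an upper bound $q_4$ for the off-diagonal pieces, reduces positivity to the scalar discriminant-type condition $D_*=q_3-q_4^2/q_1-q_4^2/q_2>0$, and finally verifies $D_*>0$ by scaling $|Z|=ar_*$ with $a>1$ fixed and sending $r_*\to0$ (so the paper's $Z$ shrinks with $r_*$). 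You instead evaluate at the special point $X=0$, where $\Phi(0,Z)=-Z$ makes everything explicit: $Q_1,Q_2$ have the closed forms you give (these are easily verified to reduce to $(1-|Z|^2\mp|Z|Y(1-Y^2))/(1\mp Y|Z|)$), $Q_3=2|Z|^2$, and the Young absorption of the cross terms leaves a $|\xih|^2$-coefficient that is positive once $|Z|$ is chosen small relative to $Y_0$; you then invoke continuity and compactness in $(X,Y,\xi/|\xi|)$ to extend the strict positivity to a small ball $|X|\le r_*<|Z|$. Your argument is shorter and conceptually cleaner, and it allows $|Z|$ to be of moderate fixed size rather than tending to zero; the trade-off is that the continuity/compactness step is soft and does not produce an explicit $r_*$ or $c_0$, whereas the paper's estimates are entirely effective. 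One small point worth stating explicitly in your write-up: the lower bounds $Q_1,Q_2\ge 2\eta$ you use when absorbing the cross terms themselves depend on $s_0=|Z|$, so the condition $s_0^2<\eta(s_0)/Y_0^2$ must be solved for $s_0$; this works because $\eta(s_0)\to 1/2$ as $s_0\to0$, but it should be said.
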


\begin{proof} 
In the below, we fix $Y_0\in(0,1)$  and  $Y\in[0,Y_0]$ is an arbitrary number. 
It is convenient to choose the coordinate system so that
$$
r_0=|X|, \qquad
Z = (r_1,0,0), \qquad 0<r_0<r_1<1.
$$
Set
\[
\Phi_*=\frac{r_1-r_0}{1-r_0r_1}, \qquad
\Phi^*=\frac{r_0+r_1}{1+r_0r_1}.
\]
Then 
\[
0<\Phi_*<\Phi^*<1
\]
and Lemma \ref{lowerbound} says that
\[
\Phi_*
\le 
\big|\Phi(X,Z)\big|\le \Phi^*.
\]
A simple computation shows that
\begin{align*}
Z\cdot \Phi(X,Z) =\frac{\tilde{Z}\cdot X-|Z|}{1-X\cdot Z}|Z|=
\frac{X_1-r_1}{1-X_1r_1}r_1
 =:R <0,
\end{align*}
which, together with $W(X,Z)$ in \eqref{W},  leads to
\begin{align*}
S:&=Z \cdot \Phit(X,Z) =\frac{R}{W(X,Z)^{1/2}}
\\&=|Z|\frac{X_1-r_1}{\big( (X_1-r_1)^2+
\gamma(U)^{-2}(X_2^2+X_3^2)^2\big)^{1/2}}
\end{align*}
Thus, we obtain 
\[
      -r_1=-|Z|\le S\le 0, 
\]
and  the equality in the first inequality is realized for $X=(X_1,0,0)$.

As a consequence, we have,
\begin{align}\notag
Q_{1} &= \displaystyle\frac{
1 - |\Phi(X,Z)|^2 Y^2 
+S  (1 - Y^2)  
  \, (|\Phi(X,Z)| + Y)}{1 - Y |\Phi(X,Z)|}
\\&=\displaystyle\frac{
(1 - |\Phi(X,Z)|^2)Y^2 \notag
+(1-Y^2)\left(1+S(|\Phi(X,Z)| + Y)\right)}{1 - Y |\Phi(X,Z)|}
\\
\label{Q11lowerb}
&\ge
(1-Y_0^2)\, \big( 1 - r_1( \Phi^* + Y_0) \big) =:q_1.
\end{align}
Therefore, we see that 
 if 
\begin{equation}\label{cond1}
k_0:=r_1(\Phi^*+Y_0)< 1,
\end{equation}
then $q_1>0$ and so, $Q_{1}>0$. 

Similarly,
\begin{align*}
Q_{2} &= \displaystyle\frac{
1 - |\Phi(X,Z)|^2 Y^2 
+ S  (1 - Y^2)  
  \, (|\Phi(X,Z)| - Y)}{1 + Y |\Phi(X,Z)|}
\\&=\displaystyle\frac{
(1 - |\Phi(X,Z)|^2)Y^2 
+(1-Y^2) \, \left(1+ S(|\Phi(X,Z)| - Y)\right) }{1 + Y |\Phi(X,Z)|}
\\&
\ge
\frac{
(1-Y_0^2)\Big(1-r_1\max(0, |\Phi(X,Z)| - Y)\Big) }{1+Y_0\Phi^*}.
\end{align*}
Therefore, we have 
$$
Q_{2} 
\ge
\frac{
(1-Y_0^2)(1 - r_1\Phi^*) }{1+Y_0\Phi^*}=:q_2>0, 
$$
since $|\Phi(X,Z)| - Y\le |\Phi(X,Z)| \le \Phi^*$.

On the other hand, clearly we have 
\[
Q_{3}\ge {2 \, \Phi_*^2 \over 1 - \Phi_*^2} \Big(1 -\Phi^*r_1\Big)=:q_3>0, 
\]
and 
\[
|Q_{13}( \xi_1, \xih)|\le 2 Y \, |\Phi(X,Z)| |Z||\xi_1||\xih|
\le 2Y_0\Phi^*r_1|\xi_1||\xih|,
\]
\[
|Q_{23}( \xi_1, \xih)|\le 2 Y \, |\Phi(X,Z)| |Z||\xi_2||\xih|
\le 2Y_0\Phi^*r_1|\xi_1||\xih|.
\]
Set now 
\[
q_4:=Y_0\Phi^*r_1
\]
and define
a quadratic formula of three variables $(x,y,z)\in\R^3$,
\[
Q_*(x,y,z)
=q_1x^2+q_2y^2+q_3z^2-
2q_4(x+y)z.
\]
Then, we obtain 
\[
Q(\xi; X,Y,Z)\ge Q_*(|\xi_1|, |\xi_2|, |\xih|)
\]
for any $\xi\in\R^{n+2}$; 
that is,  $Q$ is  positive definite if so is $Q_*$.

Since $q_1, q_2>0$, we can write, 
 for any $\kappa\in(0,1)$,
\begin{align*}
Q_*(x,& y,z)
 =\kappa(q_1 x^2+q_2 y^2)+(1-\kappa)q_1\Big(x-\frac{q_4}{(1-\kappa)q_1}z\Big)^2
\\&
+(1-\kappa)q_2\Big(y-\frac{q_4}{(1-\kappa)q_2}z\Big)^2+
\Big(q_3-\frac{q_4^2}{(1-\kappa)q_1}-\frac{q_4^2}{(1-\kappa)q_2}\Big)z^2,
\end{align*}
and since $\kappa\in(0,1)$ is arbitrary,  we can conclude that 
$Q_*$ is positive definite if and only if
\be
\label{cond2}
D_*:=q_3-\frac{q_4^2}{q_1}-\frac{q_4^2}{q_2}>0.
\ee

Let $r_*\in (0,1)$ be a number to be determined later. Let $a\in (1, 1/r_*)$ and set
 $r_1=ar_*$. Then, observe that for any $r_0\in[0,r_*]$,
 \[
 \Phi_*=\frac{r_1-r_0}{1-r_0r_1}\ge \frac{(a-1)r_*}{1-ar_*^2}, \qquad
\Phi^*=\frac{r_0+r_1}{1+r_0r_1}\le\frac{(a+1)r_*}{1+r_0r_1}\le (a+1)r_*.
\]
Hence,
\begin{align*}
k_0&=r_1(\Phi^*+Y_0)\le ar_*((a+1)r_*+Y_0)\le a(a+2)r_*=:k_1r_*,
\\
q_1&=(1-Y_0^2)(1-k_0)\ge (1-Y_0^2)(1-a(a+2)r_*)=: K_1(r_*),
\end{align*}
and 
\begin{align*} 
q_2&=\frac{(1-Y_0^2)(1 - r_1\Phi^*)}{1 + Y_0r_1}\ge 
\frac 12(1-Y_0^2)(1- a(a+1)r_*^2)=:K_2(r_*),
\\
q_3&={2 \, \Phi_*^2 \over 1 - \Phi_*^2} \Big(1 -\Phi^*r_1\Big)\ge
\frac{2(a-1)^2r_*^2 \, (1-a(a+1)r_*^2) }{(1-ar_*^2)^2-(a-1)^2r_*^2}=: K_3(r_*)r_*^2,
\\
q_4&=Y_0\Phi^*r_1\le Y_0(a+1)ar_*^2=:K_4r_*^2. 
\end{align*}
Note that $k_1, \ K_4>0$ are independent of $r_*$ and that all the above inequalities
hold for all $r_0\in[0,r_*]$.
Observe that if $r_*$ is sufficiently small,
then $K_1(r_*)$, $K_2(r_*)$, $K_3(r_*)$ are positive and  
\begin{equation}\label{cond3}
D_*\ge \Big(
K_3(r_*)-K_4\Big(
\frac{1}{K_1(r_*)}+\frac{1}{K_2(r_*)}\Big)
r_*^2\Big) \, r_*^2
\end{equation}
holds for all $r_0$.
It is easy to see that when $r_*\to 0$, we have 
$$
k_0\to 0, \quad
K_1(r_*)\to 1-Y_0^2, 
$$
and 
$$
K_2(r_*)\to \frac 12(1-Y_0^2), \quad K_3(r_*)\to 2(a-1)^2. 
$$
We can now conclude from \eqref{cond3} that 
{\sl for all  $a>1$, there exists $r_* \in (0,1)$ such that 
$ar_*<1$,  $k_0 <1$,  $D_{*}>0$,}
 which 
completes the proof of Lemma~\ref{455} with $r_1=|Z|=ar_*$.
\end{proof}
 

In turn, Kato's theory guarantees the existence of a solution 
defined in a small neighborhood 
of this hyperplane $\Hcal''_0$.
Making the transformation back to the original variables, we obtain 
a solution in a small neighborhood
of the initial line $t=0$.  This completes the proof of  Theorem \ref{theo1}.

\

\noindent{\it Step 4.} We need to show that the density $\rho$ remains non-negative. In fact, 
since the solution remains smooth enough, say of class $C^1$ in space and continuous in time, 
we can define the characteristic curves by the Cauchy-Lipschitz theorem 
$$
\dot{y}(t) = u(t, y(t)), \qquad t \geq 0. 
$$
Hence, by writing the $w$-equation as 
$$
\aligned 
& \big( \del_t w + u \cdot \nabla w \big) 
\\
& = - {c(\rho) \over 1 - \eps^4 |u|^2 c(\rho)^2} \big( \big(1 - \eps^2 |u|^2\big) \, \big(  \eps^2 \, c(\rho) \, u \cdot \nabla w
     + \ut \cdot \nabla v  \big) + |u| \nabla \cdot \ut \big),  
\endaligned 
$$
and integrating along the characteristics we obtain 
$$
{d \over dt}\big( w(t, y(t)\big) = O(1) \, c(\rho(t, y(t)). 
$$
Here, the (bounded) quantity $O(1)$ involves only the sup-norm of first-order derivatives 
of the solution. 

Now, in view of our assumption \eqref{growth} we have 
$$
c(\rho) \leq C_R \, w(\rho) 
$$
on any compact set $\rho \in [0,R]$ and for some constant $C_R$. So, we deduce that 
$$
w(t,y(t)) = w(0, y(0)) \, e^{t \, O(1)}, 
$$
along every characteristic, which, in particular, shows that $w$ remains non-negative. 
This completes the proof of Theorem~\ref{theo1}.


\section{Support of tame solutions}  
\label{general}  

Following \cite{MakinoUkaiKawashima} we define a concept of solutions, in which the velocity vector 
is required to satisfy a (non-degenerate) evolution equation even in the presence of vacuum, as follows.

\begin{definition}[Notion of tame solution]
 A measurable map $(\rho,u) : [0,T] \times [0,\infty) \times [-\eps^{-2}, \eps^{2}]$ 
is called a {\rm tame solution} of the relativistic Euler equations if 
\begin{itemize}

\item $(\rho,u)$ is a solution of class $C^1$ of the Euler equations, 

\item $w$ is also of class $C^1$, and 

\item the equation $\del_t u + u\cdot \nabla u =0$ holds in the interior of the set 
$\big\{ \rho =0 \big \}$. 
\end{itemize}

\end{definition}

\

Relying on this definition, we can establish that the support of a solution does not expand in time. 

\begin{theorem}[Property of the support of a tame solution]
 Consider the relativistic Euler equation for an equation of state $p=p(\rho)$ satisfying 
the hyperbolicity condition \eqref{pressure} together with the following vacuum condition 
\eqref{growth}. 
If $(\rho,u)$ is a tame solution of the relativistic Euler equations
and has compact support,  
then its support does not expand in time, that is, 
$$
\supp(\rho,u)(t) \subset \supp(\rhob, \ub), \qquad t \in [0,T]. 
$$
\end{theorem}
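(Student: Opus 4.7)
The plan is to propagate vacuum along characteristics: each point $x_0$ outside $\supp(\rhob, \ub)$ will be tracked by a stationary characteristic along which both $\rho$ and $u$ remain identically zero. First I would fix $x_0 \notin \supp(\rhob, \ub)$, choose $\delta>0$ so that both $\rhob$ and $\ub$ vanish on the open ball $B_\delta(x_0)$, and, for each $x \in B_\delta(x_0)$, introduce the characteristic curve $y_x$ defined by
$$
\dot y_x(t) = u(t, y_x(t)), \qquad y_x(0) = x.
$$
Because the velocity is of class $C^1$, the Cauchy--Lipschitz theorem provides a unique $C^1$-flow, so that the spacetime image
$$
\mathcal T := \bigl\{(t, y_x(t)) : t\in[0,T],\, x \in B_\delta(x_0)\bigr\}
$$
is an \emph{open} tube containing the central curve $\{(t, y_{x_0}(t))\}$ in its interior.

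The next step would be to use the ODE derived in Step~4 of the proof of Theorem~\ref{theo1},
$$
\frac{d}{dt} w\bigl(t, y_x(t)\bigr) = O(1) \, c\bigl(\rho(t, y_x(t))\bigr),
$$
together with the vacuum condition \eqref{growth}, which yields $c(\rho) \leq C_R\, w(\rho)$ on the compact range of the solution. Gronwall's inequality applied with the initial value $w(\rhob(x)) = 0$ for $x \in B_\delta(x_0)$ then forces $w \equiv 0$ throughout $\mathcal T$, and hence $\rho \equiv 0$ on this whole open tube. In particular, the central characteristic $(t, y_{x_0}(t))$ lies in the interior of the vacuum set $\{\rho = 0\}$.

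At this stage I would invoke the tame-solution property, which provides $\del_t u + u \cdot \nabla u = 0$ on a spacetime neighborhood of the central characteristic. Along $y_{x_0}$ this gives
$$
\frac{d}{dt} u\bigl(t, y_{x_0}(t)\bigr) = 0,
$$
hence $u(t, y_{x_0}(t)) \equiv \ub(x_0) = 0$. Consequently $\dot y_{x_0}(t) \equiv 0$, the characteristic is stationary with $y_{x_0}(t) = x_0$, and $(\rho, u)(t, x_0) = (0, 0)$ for every $t \in [0,T]$. Since $x_0$ was an arbitrary point in the open set $\Rn \setminus \supp(\rhob, \ub)$, the desired inclusion follows by passing to complements and closures.

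The main obstacle I would anticipate is the need to exploit the \emph{interior} qualifier in the tame-solution condition: the transport ODE for $w$ propagates the vanishing of $\rho$ only along a one-dimensional characteristic at a time, whereas the $u$-equation requires an open spacetime neighborhood on which $\rho$ vanishes. Propagating the whole open ball $B_\delta(x_0)$ simultaneously and using the $C^1$-diffeomorphism property of the flow to certify openness of the tube $\mathcal T$ is the device that reconciles the two ingredients; without such a tubular argument one could only conclude the vanishing of $u$ on a set of measure zero, which would be insufficient to immobilize the central characteristic.
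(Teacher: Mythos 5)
Your proposal is correct, but it takes a genuinely different route from the paper's own proof. The paper argues directly in the symmetric variables: writing the system as $B_0(V)\,\del_t V + \sum_j B_j(V)\,\del_j V = 0$ with $V=(w,v,\ut)$, it bounds $|\del_t V(t,x)|$ at a \emph{fixed} spatial point $x$ by a constant (involving the sup-norm of spatial derivatives) times $\sup_j|B_0(V)^{-1}B_j(V)|$, then asserts $\sup_j|B_0(V)^{-1}B_j(V)|\leq C_2|V|$ using the vacuum condition \eqref{growth}, and applies Gronwall's inequality pointwise to get $|V(t,x)|\leq e^{Ct}|V(0,x)|$, hence vanishing of $V(t,\cdot)$ outside the initial support. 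Your argument instead tracks Lagrangian characteristics $\dot y_x = u(t,y_x)$, propagates $w=0$ along them via $c(\rho)\leq C_R w(\rho)$ and Gronwall, certifies openness of the resulting tube by the $C^1$ flow-diffeomorphism, and then --- this is the key structural difference --- invokes the third (tame-solution) bullet to transport $u=0$ along the central characteristic and immobilize it. Two observations are worth recording. First, the paper's sketch never explicitly calls on the tame-solution condition $\del_t u + u\cdot\nabla u = 0$ in the vacuum interior, whereas your proof uses it exactly where one expects the definition to matter; your argument thus shows cleanly how the hypothesis is consumed. Second, the paper's displayed bound $\sup_j|B_0^{-1}B_j|\leq C_2|V|$ is delicate to verify near $u=0$, since $\ut$ is undefined there and the block $c_0|u|^2\,I$ of $B_0$ degenerates, so the inverse in the $\ut$-block is $O(|u|^{-2})$; your characteristic/tube approach avoids manipulating $B_0^{-1}$ in the degenerate regime altogether. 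Both proofs share the same two analytic ingredients --- the normalization $c\leq C w$ from \eqref{growth} and Gronwall --- but your version is more explicit about where the vacuum degeneracy and the tame condition enter, and is closer in spirit to the Makino--Ukai--Kawashima argument that the paper cites as its model.
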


\begin{proof} We now consider the relativistic Euler equations in the form 
$$
B_0(V) \, \del_t V + \sum_{j=1}^n B_j(V) \, \del_j V  = 0, 
$$
where the map $V := (w,v, \ut)$ is of class $C^1$ and satisfies
$$
|\del_t V | \leq C_1 \, \sup_j |B_0(V)^{-1} B_j(V)|.   
$$
Here, the constant $C_1$ depends on the sup norm of first-derivatives of the solution. 
In view of the explicit expressions of the matrices $B_0$ and $B_j$ (see Section~\ref{sym})
we obtain 
$$
\sup_j |B_0(V)^{-1} B_j(V)| \leq C_2 \, |V|, 
$$
where we have used our assumption \eqref{growth}. 
Now, by Gronwall's lemma we get
$$
|V(t,x)| \leq e^{Ct}|V(0,x)|, \qquad t>0, \ x\in\R^n,
$$
which completes the proof of the theorem.
\end{proof}


\section*{Acknowledgments}

The first author (PLF) was partially supported by the Centre
National de la Recherche Scientifique (CNRS) and the Agence
Nationale de la Recherche (ANR) through the grant 06-2-134423 (MATH-GR). 
The research of the second author (SU) was supported by the Department of Mathematics 
and Liu Bie Ju Centre for Mathematical Sciences at the City University of Hong Kong. 
This work was initiated during a two-week visit of the first author in these institutions, and 
both authors are very grateful to Tong Yang for inviting them and particularly appreciated his hospitality. 

This paper was completed when the first author was visiting the Mittag-Leffler Institute
in the Fall 2008 during the Semester Program ``Geometry, Analysis, and General Relativity''
organized by L. Andersson, P. Chrusciel, H. Ringstr\"om, and R. Schoen.


\small

\end{document}